\newtheorem{theorem}{Theorem}
\newtheorem{corollary}[theorem]{Corollary}
\newtheorem{proposition}[theorem]{Proposition}
\newtheorem{lemma}[theorem]{Lemma}
\newtheorem{que}{Problem}
\newtheorem{conjecture}[que]{Conjecture}
\newtheorem{remark}{Remark}
 \newenvironment{definition}[1]{\noindent{\bf Definition (#1).}}{}
\colorlet{darkgreen}{green!50!black}
\newcommand{\cls}[2][]{\ensuremath{{\mathscr{#2}}_{#1}}}
\renewcommand{\geq}{\geqslant}
\renewcommand{\leq}{\leqslant}
\newcommand{\mA}{\ensuremath{\mathscr{A}}}
\newcommand{\mS}{\ensuremath{\mathscr{S}}}
\newcommand{\mC}{\ensuremath{\mathscr{C}}}
\newcommand{\mH}{\ensuremath{\textsc{H}}}
\newcommand{\bbQ}{\ensuremath{\mathbb{Q}}}
\newcommand{\Lang}[1]{\mathcal{L}_{#1}}
\DeclareMathOperator{\argmax}{argmax}
\newcommand{\set}[1]{\ensuremath{\{{#1}\}}}
\newcommand{\walks}{\operatorname{\sf walks}}
\newcommand{\drift}{\operatorname{\sf drift}}
\newcommand{\grammar}{\operatorname{\sf grammar}}
\def\clsNeutral{\ensuremath{\varepsilon}}
\newcommand{\ShowTODO}[1]{{#1}}
\renewcommand{\ShowTODO}[1]{}
\newcommand{\TODOB}[2]{\ShowTODO{\todo[inline, linecolor=#1, backgroundcolor=#1!20!white,bordercolor=#1]{#2}}}
\newcommand{\TODOYann}[1]{\TODOB{gray}{{\bf Yann :} \sf #1}}
\date{}
\title{
Taming Reluctant Random Walks in the Positive Quadrant}
\author{Jeremie Lumbroso, Marni Mishna and Yann Ponty}
\address{JL: Department of Computer Science, Princeton University,
  Princeton, NJ, USA;
MM: Department of Mathematics, Simon Fraser University, Canada;
YP: CNRS -- LIX and AMIB project, Ecole Polytechnique and Inria Saclay, France}
\begin{document}
\begin{abstract}
   A lattice walk model is said to be reluctant if the defining step
   set has a strong drift towards the boundaries. We describe
   efficient random generation strategies for these walks.
\end{abstract}
\maketitle
\section{Introduction}
Walks on lattices are fundamental combinatorial classes. They appear
in many guises particularly in formal language theory, queuing theory,
and combinatorics as they naturally encode common relations. A typical
lattice path model is a set of walks defined by a fixed, finite set of
allowable moves (called the \emph{step set}), and a region to which
the walks are confined (typically a convex cone).  The exact and
asymptotic enumeration of lattice paths restricted to the first
quadrant (known as quarter plane models) have been a particularly
active area of study of late because of some new and interesting
techniques coming from different areas of computer algebra and complex
analysis~\cite{BoKa09, BoMi10,KuRa11, Rasc12}.

Efficient uniform random generation is useful to study the typical
large scale behavior of walks under different conditions.  Models
which restrict walks to the upper half plane can be specified by an
algebraic combinatorial grammar~\cite{Duch00,BaFl02}. Consequently,
efficient random generation schemes can be obtained using several
systematic strategies, such as recursive generation~\cite{FlZiVa94}
and Boltzmann sampling~\cite{DuFlLoSc02}.

Intriguingly, walks restricted to the first quadrant are more
complex. Rare is the quarter-plane model with an algebraic generating
function that cannot be trivially reformulated as a half-plane model.
Overwhelmingly, the cyclic lemma, combinatorial identities and other
grammar-based techniques that are so fruitful in the half-plane case,
do not easily apply. Furthermore, there is only a small proportion of
models whose generating function satisfies a differential equation
with polynomial coefficients\footnote{For example, amongst the 20~804
  small step models with less than 6 steps in 3 dimensions, only
  around 150 appear to be D-finite~\cite{BoBoKaMe15}.}, again
excluding a potential source of direct, generic generation
techniques~\cite{BaBoJa13}.

Rejection sampling is the term for a general technique where one
generates from a simpler superclass, and then rejects elements until
an element from the desired class is obtained. In the case of lattice
paths, a naive rejection strategy could use unrestricted walks as a
superset. This is only practical for those quarter-plane models whose
counting sequences grow essentially like those of the unrestricted
walks. Such is the case when the \emph{drift\/}, or vector sum of the
stepset, is positive coordinate-wise. Anticipated rejection can also
be used when the drift is $\mathbf{0}$, and provides a provably
efficient algorithm~\cite{BaSp14}.  However, any such strategy is
demonstrably doomed to failure when the drift of the step set is
negative in any coordinate, as the probability of generating long
unconstrained walks which remain in the first quadrant becomes
exponentially small. One strategy in the literature is to change the
probability on the allowable steps, and consequently forgo the
uniformity of the generation~\cite{Bousquet11}. It appears then that
the problem of efficient, uniform random generation algorithms for
generic quarter plane lattice path models is a relatively undeveloped
topic.

\subsection*{Our contribution}The main result of this paper is an
efficient rejection algorithm for the \emph{uniform\/} random
generation of walks in the quarter plane. It is an application of
recent results due Johnson, Mishna and Yeats~\cite{JoMiYeXX}, Garbit
and Raschel~\cite{GaRa14} amongst others. It is provably efficient and
straightforward to implement. It is most impressive on walks whose
drift is negative in both coordinates, a property we call
\emph{reluctant}, but it also offers notable gains for any model which
tends to either boundary.

More precisely, we describe a strategy in which every walk of
length~$n$ is generated with equal likelihood. The efficacy result
holds for quarter plane models with any step set, and is easily
generalized to higher dimensions.  Figure~\ref{fig:bigwalk}
illustrates a walk of over 18000 steps that was generated uniformly at
random for the quarter plane model with reluctant step
set \[\mS=\{(1,0), (0,1), (-1, 0), (1, -1), (-1, -1), (-2, -1) \}.\]
%
\begin{figure}\center
\includegraphics[width=0.75\textwidth]{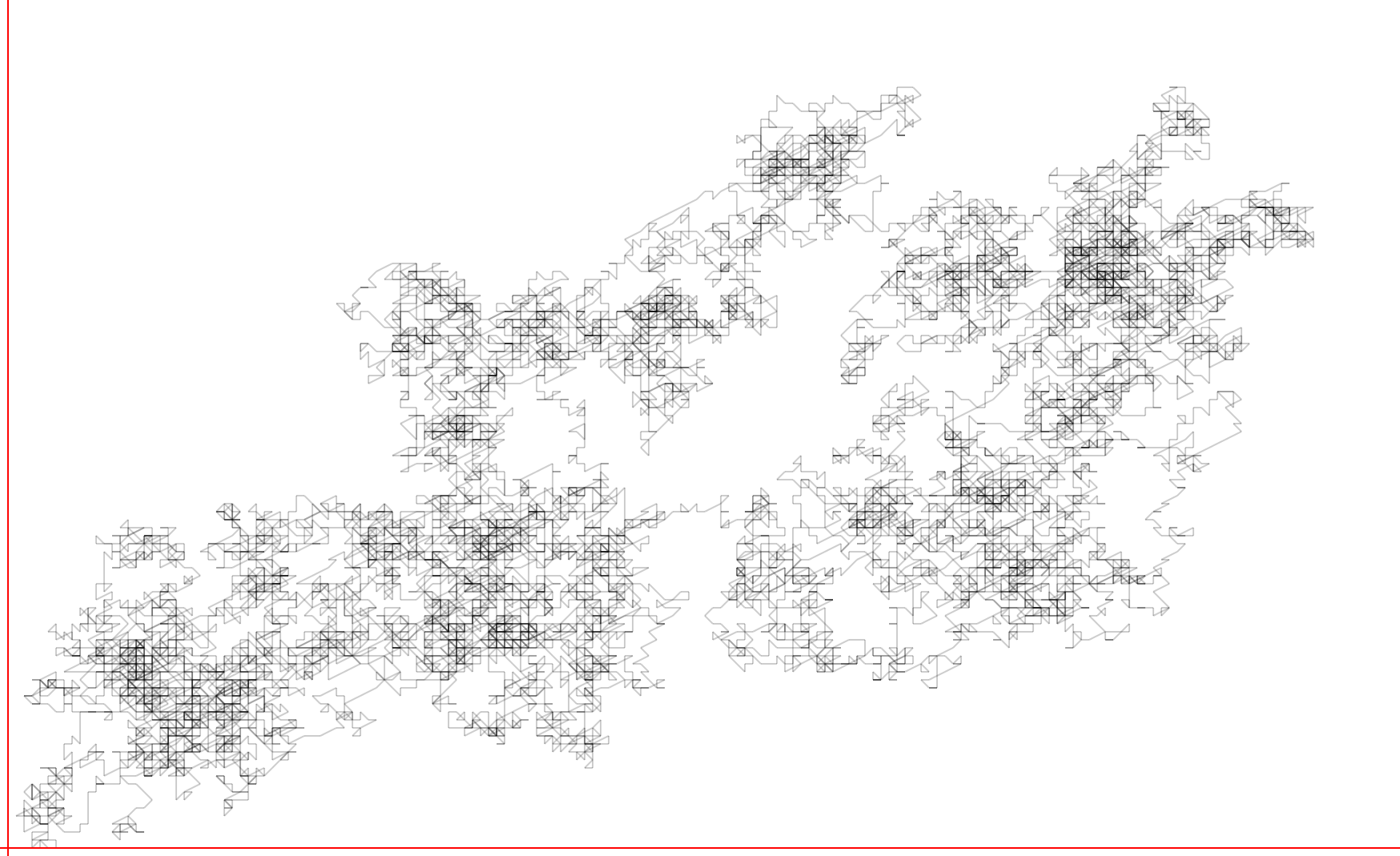}
\caption{\sc A random walk with 18 000 steps in the quarterplane using the 
  stepset \mbox{$\mS=\{(1,0), (0,1), (-1, 0), (1, -1), (-1, -1), (-2, -1) \}$}.}
\label{fig:bigwalk}
\end{figure}
%
The probability of generating a walk of this length by rejection from
the set of unrestricted sequences of steps~$\mS^*$ is less than
$\frac{5.3299}{6}^{18000}\sim 1.75\cdot 10^{-926}$. However,
with our strategy, it was generated (relatively quickly). 

Rejection from an unrestricted walk is not the only competition. For
the purposes of comparison, we describe a recursive strategy which
requires exact enumeration results to be tabulated in advance. This
has potential to be efficient, and is insensitive to the drift of the
model, but does require a lot of storage. We discuss this algorithm in
Section~\ref{sec:Recursive}.

Our alternative sampler is based on a straightforward combinatorial
interpretation of an enumerative result. Roughly, we use that for any
quarter plane model, there is a corresponding half plane model such
that asymptotically, both models have the same exponential growth
factor. This implies that a rejection strategy from this half plane
has sub-exponential rate of rejection.  The sub-exponential factors
are conjectured to also match in many cases, suggesting that it is in
fact a particularly efficient strategy.

Asymptotic enumerative results are recalled in the next section.  A
{\em baseline} algorithm, based on a trivial recurrence, is presented
in Section~\ref{sec:naive}. Section~\ref{sec:rejection} describes our
main rejection algorithm. Its practical implementation depends on the
rationality of the slope of the half-plane model, and is discussed in
Subsection~\ref{sec:grammars}.  We conclude with some remarks
regarding implementation aspects, along with possible extensions.

\section{2D lattice path basics}
A 2D lattice path model is a combinatorial class consisting of walks
on the 2D integer lattice, starting at the origin, and taking steps
from some finite multi-set $\mathcal{S}\subset \mathbb{Z}^2$ of
allowable steps. In this work, we consider the restriction of such
walks to the positive quadrant $Q=\mathbb{Z}^2_{\geq 0}$, although the
general strategy works for a wider set of cones. We use the half-plane
$H_\theta$ defined by a line through the origin:
\[
H_\theta= \{ x \sin \theta + y \cos\theta \geq 0 \}.
\]

For a fixed, finite step set~$\mS\subset\mathbb{Z}^2$, a given cone
$C$, and positive integer $n$ we define $\walks (C, \mS, n)$ to be the
class of walks of length~$n$ starting at the origin, taking steps in
$\mS$, and staying in $C$. Formally,
\[
  \walks (C, \mS, n)=\{ x_0,x_1,\dots,x_n\mid x_0= (0,0) \,\wedge\,
  x_{j+1}-x_j\in\mathcal{S} \,\wedge\, x_i\in C \}.
\]
The complete class is given by $\walks (C,\mS)=\bigcup_{n\geq 0}
\walks (C, \mS, n)$.

We use the following enumerative quantities in our analysis:
\begin{equation}\label{eqn:qn}
q_n^\mS=|\walks (Q,\mS, n)|\quad h^\mS_n(\theta)=|\walks (H_\theta, \mS, n)|.
\end{equation}%
The asymptotic regime for $q_n^\mS$ is always of the form
\begin{equation}q^\mS_n \sim  \gamma\,\rho^{-n}\, n^{-r}, \label{eq:asymptwalks}
\end{equation}
for real numbers $\rho$ and $r$. We refer to $\rho^{-1}$ as the
exponential growth factor of the model.  The asymptotic regime
critically (although not exclusively) depends on the \emph{drift\/} of
the step set~$\mS$, defined as $\drift (\mS)= \sum_{s\in S} s$. %
  A walk model $\walks(Q, \mS)$ is said to be \emph{reluctant\/} when
  $\drift (\mS)=(\delta_1, \delta_2)$ with $\delta_1<0$ and
  $\delta_2<0$.

  Reluctant models for the positive quadrant have exponential growth
  factors that are lower than the number of steps. It follows that the
  naive algorithm that performs rejection from unconstrained walks,
  has exponential time complexity, motivating our algorithmic
  contribution. Indeed, even when one of $\delta_1<0$ or $\delta_2<0$,
  the exponential growth factor can be less than the number of steps.

\section{Basic recursive random generator}\label{sec:naive}
The exact value of~$q^\mS_n$ can be expressed using a recurrence. 
This motivates a straightforward instance of the recursive method~\cite{Wilf1977,FlZiVa94}, where steps are simply drawn sequentially, using probabilities that depend both on the current position reached, and the number of remaining steps. 

\subsection{Exact enumeration of walks}
Define $q^\mS_n(x,y)$ to be the number of positive suffixes of walks
in $\walks (Q, \mS, n)$ which start from the point $(x, y)$ and remain in the 
positive quadrant. Such suffix walks of length $n$ can be factored as a first step $(i,j)\in\mS$, keeping the walk in the positive quadrant, followed by another positive suffix of length $n-1$ starting at $(x+i,y+j)$. This leads to the recurrence:
\begin{equation}\label{eq:qnij}
q^\mS_n(x,y) = \begin{cases} \displaystyle
\sum_{\substack{(i, j)\in \mS \text{ s.t.}\\x+i\ge0,y+j\ge0 }} q^\mS_{n-1}(x+i,y+j)& \mbox{if }n>0,\\
1 & \mbox{if } n=0
\end{cases} 
\end{equation}
A quadrant walk is also the positive suffix of a walk starting at $(0,0)$, thus $q^\mS_n := q^\mS_n(0,0)$. This recurrence can also be trivially adapted to handle general cones, higher dimensions, or for further constraining the end-point, e.g. to count/generate meanders, or walks ending on the diagonal.

\subsection{Algorithm and complexity analysis}
\label{sec:Recursive}
Once the cardinalities $q^\mS_n(x,y)$ are available, a uniform random walk is generated, by choosing one of the steps with probabilities proportional 
to the number of possible suffixes.

\begin{enumerate} 
 \item {\bf Preprocessing.} Precompute $q^\mS_{n'}(x,y)$ for each $n'\in [0,n]$ and $(x,y) \in [0,n\cdot a]\times[0,n\cdot b]$, where $a:= \max_{(i,j)\in\mS}i$ and $b:= \max_{(i,j)\in\mS}j$;\label{step:precomp}
 \item {\bf Generation.} Initially starting from $(0,0)$ and $n':=n$, iterate until $n'=0$:\label{step:elongation}
 \begin{enumerate}
 \item Choose a step $(i,j)\in\mS$ with probability $q^\mS_{n'-1}(x+i,y+j)/q_{n'}(x,y)$; 
 \item Add $(i,j)$ to the walk, update the current point
   ($(x,y) := (x+i, y+j) $), and decrease the remaining length
   ($n':=n'-1$);
\end{enumerate}
\end{enumerate}

\begin{theorem}[Complexity/correctness]
\label{thm:storage}
  The random uniform generation of~$k$ 2-dimensional walks confined
  to the positive quadrant can be performed in $\Theta(k\cdot n +
  n^{3})$ arithmetic operations, using storage for $\Theta(n^{3})$
  numbers.
\end{theorem}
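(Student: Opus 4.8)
The plan is to treat correctness (uniformity) and the resource bounds as two essentially independent claims. For correctness I would show that the generation phase outputs each walk of $\walks(Q,\mS,n)$ with probability exactly $1/q^\mS_n$. First I would check that the per-step transition probabilities form a genuine distribution: fixing a current position $(x,y)$ with $n'$ steps remaining, the recurrence \eqref{eq:qnij} states $\sum_{(i,j)} q^\mS_{n'-1}(x+i,y+j)=q^\mS_{n'}(x,y)$, the sum running over the admissible steps $(i,j)\in\mS$, so the proposed probabilities $q^\mS_{n'-1}(x+i,y+j)/q^\mS_{n'}(x,y)$ sum to $1$. Then, for a fixed walk $w=(x_0,\dots,x_n)$ with $x_0=(0,0)$, its generation probability is the telescoping product $\prod_{j=0}^{n-1} q^\mS_{n-j-1}(x_{j+1})/q^\mS_{n-j}(x_j)$, which collapses to $q^\mS_0(x_n)/q^\mS_n(x_0)=1/q^\mS_n$ using the base case $q^\mS_0(x_n)=1$ and $q^\mS_n(x_0)=q^\mS_n$. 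Since this value is independent of $w$, the generation is uniform.

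For the resource bounds I would treat $|\mS|$, $a$ and $b$ as constants fixed by the model, independent of $n$ and $k$. The storage claim is a direct count: the table holds $q^\mS_{n'}(x,y)$ for $n'\in[0,n]$ and $(x,y)\in[0,na]\times[0,nb]$, i.e.\ $(n+1)(na+1)(nb+1)=\Theta(n^3)$ numbers. For the running time I would separate the two phases. Each table entry is computed from \eqref{eq:qnij} as a sum of at most $|\mS|=O(1)$ previously filled entries, so populating the entire table costs $\Theta(n^3)$ arithmetic operations. The generation phase performs, for each of the $k$ walks, exactly $n$ iterations, each inspecting the $O(1)$ steps of $\mS$ to build and sample from the local distribution, for a total of $\Theta(kn)$. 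Adding the two phases yields $\Theta(kn+n^3)$.

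I do not anticipate a serious obstacle, as the argument is elementary once the recurrence is available; the work is mostly careful bookkeeping. The one point deserving attention is the $\Omega(n^3)$ half of the $\Theta$ bounds: it holds because the stated algorithm explicitly fills and stores every entry of the table, whereas a refined variant restricting each layer $n'$ to the positions reachable in $n-n'$ steps would populate a region of size $\Theta((n-n')^2)$ and still sum to $\Theta(n^3)$, so the order is unchanged. A secondary subtlety is ensuring every denominator in the telescoping product is strictly positive, so that the probabilities are well defined; this is automatic along any prefix the generator can reach, since such a prefix admits at least one legal completion and hence has $q^\mS_{n'}(x,y)>0$.
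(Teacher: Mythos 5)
Your proposal is correct and follows essentially the same route as the paper: the paper proves uniformity by induction on the number of remaining steps, which is exactly your telescoping product unrolled, and its complexity accounting is the same table-size and per-step count you give (the paper keeps the factor $|\mS|$ explicit where you absorb it as a constant). Your added checks---that the local probabilities sum to $1$ via the recurrence and that denominators along reachable prefixes are positive---are details the paper leaves implicit but do not change the argument.
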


\begin{proof}

  The preprocessing stage should only be computed once in the
  generation of $k$ sequences. It involves
  $\Theta(|\mS| \cdot n^{d+1})$ arithmetic operations, and requires
  storage for $\Theta(n^{d+1})$ large numbers. The generation of a
  single walk requires the generation of $\Theta(n)$ random numbers
  and, for each of them, their comparisons to $\Theta(|\mS|)$ other
  numbers.
  
 An induction argument establishes the correctness of the algorithm. Assume that, for all $n'<N$ and $(x,y) \in [0,n'\cdot a]\times[0,n'\cdot b]$, the positive suffixes are uniformly generated, a fact that can be verified when $n'=0$. Then for $n'=N$, the algorithm chooses a suitable step $(i,j)\in \mS$, and then recursively generates a -- uniform from the induction hypothesis -- suffix from the updated position. The probability of generating any such walk is therefore
 \[ \mathbb{P}(w) = \frac{q^\mS_{N-1}(x+i,y+j)}{q_N(x,y)} \times \frac{1}{q^\mS_{N-1}(x+i,y+j)} = \frac{1}{q^\mS_{N}(x,y)}\]
 and we conclude with the uniformity of the generation.
\end{proof}

In practice however, the memory consumption of the algorithm grows in
$\Theta(n^4)$ bits, which limits the utility of this strategy to
$n<500$. Thus the above algorithm only serves as a baseline for our
alternative based on rejection.

\section{Efficient rejection sampler from 1D models}\label{sec:rejection}
We recall some basics of rejection sampling for our  analysis.
Let~$\mA$ be a combinatorial class which contains the sub-class
$\mC$. Given a random sampler for $\mA$, we can use a rejection
strategy to make a random sampler for $\mC$. Let $a_n$ and~$c_n$
respectively count the number of elements of size $n$ in $\mA$ and
$\mC$.   Following
Devroye~\cite[Chapter II.3]{Devr86}, we say that class $\mA$
\emph{efficiently covers\/} $\mC$ if
\[\left(\frac{a_n}{c_n}\right) \in \mathcal{O}(n^{p}), \]  
Here  $p \ge 0$ is some constant independent of $n$. In other words, 
asymptotically, the expected number of elements drawn from~$\mA$
before generating an element in~$\mC$ is polynomial in~$n$. Ideally~$p$ is as
small as possible.

\subsection{Candidate superclass: Half-plane model}
Our algorithm arises from the surprising observation made
by Johnson, Mishna, and Yeats~\cite{JoMiYeXX}, later proven by Garbit
and Raschel~\cite{GaRa14}: 
\begin{theorem}[Garbit and Raschel~\cite{GaRa14}]\label{th:GR2014}
  Consider a step set $\mS$, let
  $\rho(\theta)^{-1}:=\lim_{n\rightarrow \infty}
  h^\mS_n(\theta)^{1/n}$ be the exponential growth factor of the
  half-plane model $\walks (H_\theta, \mS)$, and define
  \[\theta^* := \argmax_{0\leq \theta \leq \pi/2} \rho(\theta),\]
  Then the growth factor $\rho^{-1}:=\lim_{n\rightarrow \infty}
  (q^\mS_n)^{1/n}$ of walks in the positive quadrant $Q$ satisfies:
\begin{equation}
\label{eq:qnhn}
\rho=\rho(\theta^*).
\end{equation}
\end{theorem}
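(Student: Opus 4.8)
The plan is to prove the two inequalities $\rho \ge \rho(\theta^*)$ and $\rho \le \rho(\theta^*)$ separately; the first is elementary and the second carries all the weight of the Garbit--Raschel result. For every $\theta \in [0,\pi/2]$ both $\sin\theta$ and $\cos\theta$ are nonnegative, so every point of $Q=\bbZ^2_{\ge 0}$ satisfies $x\sin\theta + y\cos\theta \ge 0$; hence $Q \subseteq H_\theta$ and each quarter-plane walk is in particular a half-plane walk, giving $q^\mS_n \le h^\mS_n(\theta)$. Taking $n$-th roots and letting $n\to\infty$ yields $\rho^{-1}\le\rho(\theta)^{-1}$ for every $\theta$, and minimizing over $\theta$ (equivalently, maximizing $\rho(\theta)$) gives $\rho^{-1}\le\rho(\theta^*)^{-1}$, i.e. $\rho\ge\rho(\theta^*)$. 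So the quarter-plane walks grow \emph{at most} as fast as the slowest admissible half-plane, and only the reverse bound remains.

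The reverse bound is best phrased analytically. Introduce the (strictly convex) Laplace transform $L(x)=\sum_{s\in\mS}e^{\langle x,s\rangle}$ for $x\in\bbR^2$. The standard reduction of a half-plane model to a one-dimensional problem, by projecting onto the inward normal $u_\theta=(\sin\theta,\cos\theta)$, identifies the half-plane growth factor as $\rho(\theta)^{-1}=\min_{t\ge 0}L(t\,u_\theta)$, the minimizing $t$ being positive precisely when the projected drift $\langle\drift(\mS),u_\theta\rangle$ is negative. As $t$ ranges over $[0,\infty)$ and $\theta$ over $[0,\pi/2]$, the vector $t\,u_\theta$ sweeps out exactly the closed dual quadrant $\bbR^2_{\ge 0}$, so
\[
\rho(\theta^*)^{-1}=\min_{0\le\theta\le\pi/2}\min_{t\ge 0}L(t\,u_\theta)=\min_{x\in\bbR^2_{\ge 0}}L(x)=:L(x^*).
\]

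The lower bound is obtained by a Cramér (exponential) change of measure at $x^*$. Define the tilted probabilities $\tilde w_s=e^{\langle x^*,s\rangle}/L(x^*)$ on $\mS$ and let $(S_k)$ be the random walk with i.i.d.\ steps distributed as $\tilde w$. For a length-$n$ walk with steps $s_1,\dots,s_n$ and endpoint $p$ one has $\prod_k\tilde w_{s_k}=e^{\langle x^*,p\rangle}/L(x^*)^n$. Restricting to quarter-plane walks whose endpoint lies in a fixed bounded window $B$, on which $\langle x^*,\cdot\rangle\le M$ since $x^*\in\bbR^2_{\ge 0}$, and using $e^{\langle x^*,p\rangle}\ge 1$ for $p\in Q$, we get
\[
q^\mS_n \ge \sum_{\substack{w\in\walks(Q,\mS,n)\\ \mathrm{endpt}(w)\in B}} 1 \;\ge\; e^{-M}L(x^*)^n\,\mathbb{P}\big(S_1,\dots,S_n\in Q,\ S_n\in B\big).
\]
Thus $\rho^{-1}\ge L(x^*)=\rho(\theta^*)^{-1}$ will follow as soon as this survival-with-localization probability is shown to decay only subexponentially (indeed polynomially).

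The crux is controlling that probability, and here the location of the constrained minimizer $x^*$ governs the argument. If $x^*$ lies in the interior of $\bbR^2_{\ge 0}$ --- the generic reluctant situation, where $\nabla L(0)=\drift(\mS)$ has both coordinates negative and the minimum is pushed inward until $\nabla L(x^*)=0$ --- then the tilted step has zero mean $\nabla L(x^*)/L(x^*)=0$, and the confinement-plus-localization probability decays polynomially by the Denisov--Wachtel persistence asymptotics for centered walks in cones, combined with a local limit theorem to fix $S_n\in B$. If instead $x^*$ sits on a facet or the apex of $\bbR^2_{\ge 0}$, the KKT conditions force the tilted drift $\nabla L(x^*)$ to point into the quadrant, so the walk stays in $Q$ with probability bounded below by a constant and localization costs only a polynomial factor. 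In every case the probability is subexponential, yielding $\rho\le\rho(\theta^*)$ and hence the theorem. I expect the main obstacle to be exactly the centered case: pinning down the polynomial persistence exponent for a zero-drift walk confined to the quadrant while simultaneously controlling the endpoint through a local limit theorem. This is the technical heart of Garbit and Raschel's proof and genuinely requires the full multidimensional fluctuation theory, rather than the one-dimensional reflection arguments that suffice in the half-plane reduction above.
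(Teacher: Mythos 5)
The paper does not prove this statement: Theorem~\ref{th:GR2014} is imported wholesale from Garbit and Raschel~\cite{GaRa14}, so there is no internal proof to compare against. Judged on its own, your outline reconstructs the actual strategy of the cited proof quite faithfully: the containment $Q\subseteq H_\theta$ for $\theta\in[0,\pi/2]$ giving $\rho\ge\rho(\theta^*)$ is correct and is indeed the trivial direction, the identification $\rho(\theta^*)^{-1}=\min_{x\in\bbR^2_{\ge0}}L(x)$ via the one-dimensional projection is right, and the Cram\'er tilt at the constrained minimizer $x^*$ combined with the Denisov--Wachtel persistence asymptotics is exactly the engine of the Garbit--Raschel argument. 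You are also right to flag the centered case as the technical heart.

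There is, however, one concrete gap in your boundary case. Suppose $x^*$ lies on a facet, say $x^*=(0,x_2^*)$ with $x_2^*>0$. The KKT conditions give $\partial_2 L(x^*)=0$ but only $\partial_1 L(x^*)\ge 0$; if $\partial_1 L(x^*)>0$, the tilted walk has first coordinate drifting to $+\infty$, so $(S_n)_1\approx cn$ and your event $\{S_n\in B\}$ for a \emph{fixed} bounded window $B\subset\bbR^2$ has exponentially small probability --- the lower bound you derive then says nothing. The statement ``localization costs only a polynomial factor'' is exactly what fails here. The repair is to notice that the weight $e^{-\langle x^*,p\rangle}$ only involves the coordinates where $x_i^*>0$, and by complementary slackness those are precisely the coordinates in which the tilted drift vanishes; so one should localize $S_n$ only in those coordinates (a centered, hence polynomially cheap, constraint) and impose no endpoint condition in the coordinates where $x_i^*=0$. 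Relatedly, ``stays in $Q$ with probability bounded below by a constant'' is too strong when a drift component is merely zero rather than strictly positive (that costs $n^{-1/2}$), though this does not threaten the subexponential conclusion. With those two corrections the sketch goes through and matches the published proof.
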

This says that the exponential growth of the quarter-plane
model is equal to the exponential growth of a superclass 
half-plane model. Furthermore the value of $\theta^*$ is explicitly computable.
\begin{corollary}\label{thm:EfficientCover}
  The combinatorial class $\walks (H_{\theta^*}, \mS)$ efficiently covers
  $\walks (Q,\mS)$.
\end{corollary}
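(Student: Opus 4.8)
The plan is to compare the two counting sequences $a_n = h_n^\mS(\theta^*)$ and $c_n = q_n^\mS$ through their full asymptotic expansions, and to use Theorem~\ref{th:GR2014} to cancel their exponential parts. First I would record the geometric containment that makes the rejection meaningful: for every $\theta\in[0,\pi/2]$ the normal vector $(\sin\theta,\cos\theta)$ lies in the first quadrant, so any $(x,y)\in Q$ satisfies $x\sin\theta + y\cos\theta\ge 0$; hence $Q\subseteq H_{\theta^*}$ and therefore $\walks(Q,\mS)\subseteq\walks(H_{\theta^*},\mS)$. In particular $q_n^\mS\le h_n^\mS(\theta^*)$, so the ratio $a_n/c_n$ is bounded below by $1$ and a rejection sampler for $\walks(Q,\mS)$ drawn from $\walks(H_{\theta^*},\mS)$ is well defined.

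Next I would pin down the subexponential behaviour of each class. For the quarter plane I would invoke the asymptotic form~\eqref{eq:asymptwalks}, writing $q_n^\mS\sim\gamma\,\rho^{-n}n^{-r}$ with $\gamma>0$. For the half-plane superclass I would exploit the fact that confinement to $H_{\theta^*}$ is a one-sided constraint: projecting each step $s\in\mS$ onto the normal direction $(\sin\theta^*,\cos\theta^*)$ turns a walk in $H_{\theta^*}$ into a one-dimensional walk, with steps drawn from the (bounded) projected multiset, required to stay nonnegative. Standard one-dimensional fluctuation theory for such constrained walks yields an estimate $h_n^\mS(\theta^*)\sim\gamma'\,\rho(\theta^*)^{-n}n^{-r'}$ with $\gamma'>0$ and $r'=\tfrac12$, since after tilting to the critical growth rate the underlying random walk has zero drift.

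Finally I would combine the two estimates. Theorem~\ref{th:GR2014} gives $\rho=\rho(\theta^*)$, so the exponential factors cancel and
\[
  \frac{a_n}{c_n}=\frac{h_n^\mS(\theta^*)}{q_n^\mS}\sim\frac{\gamma'}{\gamma}\,n^{\,r-r'}.
\]
The containment bound $q_n^\mS\le h_n^\mS(\theta^*)$ forces $r\ge r'=\tfrac12$, so $p:=r-\tfrac12$ is a nonnegative constant independent of $n$, and the displayed equivalence gives $a_n/c_n\in\mathcal{O}(n^{p})$. This is exactly the statement that $\walks(H_{\theta^*},\mS)$ efficiently covers $\walks(Q,\mS)$.

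I expect the delicate point to be the justification of the clean polynomial correction rather than the exponential cancellation. On the half-plane side one must verify that the one-dimensional reduction genuinely produces the $n^{-1/2}$ meander exponent, including when $\tan\theta^*$ is irrational, so that the projected steps take irrational values; on the quarter-plane side one must rely on the fact, asserted in~\eqref{eq:asymptwalks}, that $q_n^\mS$ carries a purely polynomial subexponential factor with no stretched-exponential correction. This last point is the deep input (of Denisov--Wachtel / Garbit--Raschel type), without which equality of exponential growth rates alone would not suffice to bound the ratio $a_n/c_n$ polynomially.
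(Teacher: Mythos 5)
Your overall route is the same one the paper (implicitly) takes: the corollary is stated without a separate proof precisely because it follows from the containment $Q\subseteq H_{\theta^*}$, the asymptotic template~\eqref{eq:asymptwalks} for $q_n^\mS$, the Banderier--Flajolet asymptotics for half-plane meanders, and the cancellation of the exponential factors via Theorem~\ref{th:GR2014}. You also correctly identify the genuinely deep input: equality of exponential growth rates alone would not rule out a stretched-exponential gap, so one really needs the purely polynomial subexponential correction for $q_n^\mS$ asserted in~\eqref{eq:asymptwalks}.

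One intermediate claim is wrong, though it does not sink the conclusion. You assert that $h_n^\mS(\theta^*)\sim\gamma'\rho(\theta^*)^{-n}n^{-1/2}$ ``since after tilting to the critical growth rate the underlying random walk has zero drift.'' The tilted walk governs only the exponential rate; the polynomial correction of the \emph{counting} sequence is determined by the drift of the untilted projected step set $\mA(\theta^*)$. By Remark~\ref{rem:misc}, for a reluctant $\mS$ this drift is negative, so the Banderier--Flajolet exponent is $n^{-3/2}$, not $n^{-1/2}$ (in general it is $n^{0}$, $n^{-1/2}$, or $n^{-3/2}$ according to the sign of the drift). This is consistent with the paper's complexity statement that the expected number of rejections is $\Theta(n^{r-3/2})$, whereas your computation would give $\Theta(n^{r-1/2})$. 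Since in every case the ratio is $\Theta(n^{r-r'})$ with $r-r'\geq 0$ forced by the containment $q_n^\mS\leq h_n^\mS(\theta^*)$, the corollary itself survives; but you should correct the exponent, both because it is off by a factor of $n$ and because the value of $p$ is exactly what the subsequent complexity analysis uses.
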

Next we consider the sub-exponential factors, as this gives the
polynomial complexity of the rejection.  On the side of the half-plane
walks, the asymptotic formulas for~$h^\mS_n(\theta)$ can be deduced 
from the complete generating function study of
Banderier and Flajolet~\cite{BaFl02}. The sub-exponential factors are
either $n^0, n^{-1/2}$, or $n^{-3/2}$, depending on the drift of the model
(positive, zero and negative respectively).
 
For quarter-plane walks, the picture is less complete. The case of
excursions for models with zero drift was described by Denisov and
Wachtel~\cite{DeWa15}, and from this work
Duraj~\cite[Theorem~II]{Dura14} was able to conclude explicit formulas
for reluctant walks: Let $S(x,y)=\sum_{(i,j)\in \mS} x^iy^j$, and let
$(\alpha, \beta)$ be the unique positive critical point of
$S(x,y)$. Such a point always exists, provided that~$\mS$ satisfies
some non-triviality conditions. Then, \begin{equation}\label{eq:qnsim}
  q^\mS_n \sim \gamma\,\rho^{-n}\, n^{-r},
\end{equation}
where $\rho$
and $r$ satisfy
\[
\rho=\frac{1}{S(\alpha,\beta)}\text{ and }
r=1+\pi\arccos
\frac{S_{xy}(\alpha, \beta)}{\sqrt{S_{xx}(\alpha, \beta)    S_{yy}(\alpha, \beta)}}.
\]
 \SetKwFunction{UniformDraw}{UniformDraw}
 \SetKwFunction{Map}{2DMap}
\begin{algorithm}[t]
 \KwData{Reluctant step set $\mS\subset\mathbb{Z}^2$, length $n$}
 \KwResult{$w\in \walks(Q, \mS, n)$ drawn uniformly at random}
 \tcp{Determine optimal slope $m=arctan(\theta^*)$ following \cite{JoMiYeXX}}
 \lIf{$\mS$ is singular}%
 {$m\gets 0$  }%
 \Else{
  Set $S(x,y)=\sum_{(i,j)\in \mS} x^iy^j$\;
  Determine  $(x,y)=(\alpha, \beta)$, the unique positive solution of
    $\frac{d}{dx}S(x,y)=\frac{d}{dy} S(x,y)=0$\;
 \lIf{$\beta=1$}%
 {$m=\infty $}%
 \lElse{$m=\ln \alpha/\ln\beta$}%
 }%
 \tcp{Create suitable grammar $\mathcal{G}$}
\lIf{ $m=\infty$}{$p\gets 1$ and $q\gets 0$}
\lElseIf{ $m$ is rational}{find $p,q\in\mathbb{N}$ so that $m=p/q$}
\lElse{find $p/q$,  a $1/\sqrt{n}$-rational approximation to $m$}
{$\mA\to\{ip+jq:(i,j)\in \mS\}$\;
$\mathcal{G}\to\grammar (\mA)$\;}
 \tcp{Main rejection loop}
 \Repeat{$\Map{$w$} \in Q$}{$w \to \UniformDraw{$\mathcal{G},n$}$}

 \caption{Outline of our rejection algorithm. \protect\UniformDraw{$\mathcal{G},n$} denotes a uniform sampler of walks of length $n$ for the grammar $\mathcal{G}$, and \protect\Map{$w$} indicates the reintepretation of $w$ as a sequence of 2D steps.\label{alg:fullmonty}}
\end{algorithm}

\subsection{The algorithm}
Algorithm~\ref{alg:fullmonty} implements a classic rejection from a
carefully-chosen half-plane model.

\begin{theorem}[Complexity of Algorithm~\ref{alg:fullmonty}]
  Let $\mS$ be a reluctant walk model and let $M(n)$ denote the time
  complexity of generating a walk for the half-plane model
  $H_{\theta^*}$.  The expected time taken by Algorithm~\ref{alg:fullmonty} to
  generate a walk in the positive quadrant is in $\Theta\left(M(n)\times
    n^{r-3/2}\right).$
\end{theorem}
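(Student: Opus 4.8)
The plan is to decompose the expected running time as a product: the cost of generating one candidate walk from the half-plane grammar, times the expected number of candidates needed before one lands in the quarter plane. The second factor is exactly the reciprocal of the acceptance probability, namely $h^\mS_n(\theta^*)/q^\mS_n$, so the whole argument reduces to estimating this ratio and multiplying by $M(n)$.

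First I would identify the cost of a single trial. Each iteration of the repeat-loop draws one uniform walk of length $n$ from the grammar $\mathcal{G}$, at cost $M(n)$ by definition, and then applies the map $\Map{w}$ to test membership in $Q$; since the map reinterprets the one-dimensional word as a sequence of $2$D steps and checks the quadrant constraint in a single linear scan, this test costs $\Theta(n)$, which is absorbed into $M(n)$ (as any reasonable sampler already spends $\Omega(n)$ time producing a length-$n$ object). Thus the per-trial cost is $\Theta(M(n))$.

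Next I would pin down the expected number of trials. Because the grammar samples uniformly from the half-plane class $\walks(H_{\theta^*},\mS,n)$, and a drawn walk is accepted precisely when it stays in $Q$, the acceptance probability of a single trial is $q^\mS_n / h^\mS_n(\theta^*)$. The number of trials is geometric, so its expectation is $h^\mS_n(\theta^*)/q^\mS_n$. Now I invoke the asymptotics available from the excerpt: by Theorem~\ref{th:GR2014} the two classes share the exponential growth factor $\rho^{-1}$, so the ratio is subexponential; quantitatively, the Banderier--Flajolet analysis gives $h^\mS_n(\theta^*)\sim c\,\rho^{-n} n^{-3/2}$ for the negative-drift half-plane model, while Duraj's formula~\eqref{eq:qnsim} gives $q^\mS_n\sim \gamma\,\rho^{-n} n^{-r}$. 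Dividing, the matching $\rho^{-n}$ factors cancel and the expected number of trials is $\Theta(n^{-3/2}/n^{-r})=\Theta(n^{r-3/2})$.

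Finally I would multiply the two factors to obtain the claimed $\Theta\!\left(M(n)\times n^{r-3/2}\right)$ expected time, and conclude. The main obstacle I anticipate is justifying that the half-plane model in question genuinely has the negative-drift subexponential exponent $n^{-3/2}$: the grammar is built from the projected steps $\{ip+jq:(i,j)\in\mS\}$, so one must verify that projecting a reluctant step set onto the optimal slope $\theta^*$ yields a one-dimensional walk with strictly negative drift, placing it in the correct case of the Banderier--Flajolet trichotomy. A secondary subtlety is the rational-approximation branch of the algorithm: when $m$ is irrational and one uses a $1/\sqrt{n}$-rational approximation $p/q$, I would need to argue that the resulting slight perturbation of $\theta^*$ does not change the exponential growth rate $\rho^{-1}$ at this length scale (so the $\rho^{-n}$ factors still cancel), which is the point of choosing the approximation accuracy to scale with $1/\sqrt{n}$.
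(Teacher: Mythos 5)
Your argument is essentially identical to the paper's: the proof there simply observes that the expected number of trials is ${h^\mS_n(\theta^*)}/{q^\mS_n}$ and combines the Banderier--Flajolet $n^{-3/2}$ subexponential exponent for the negative-drift half-plane model with Duraj's $n^{-r}$ for the quarter plane (the exponential factors cancelling by Theorem~\ref{th:GR2014}) to get $\Theta(n^{r-3/2})$, then multiplies by $M(n)$. The two caveats you raise are handled as you anticipate: the negative drift of the projected one-dimensional model for a reluctant $\mS$ is asserted in Remark~\ref{rem:misc}, and the irrational-slope/rational-approximation issue is deliberately left outside this theorem (the paper later notes that any nonzero perturbation of the slope makes the rejection exponential, and only conjectures that a $1/\sqrt{n}$-approximation suffices).
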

This immediately follows from formula~\eqref{eq:qnsim}, from which we
deduce that the expected number of trials is
${h^\mS_n(\theta)}/{q^\mS_n} \in \Theta(n^{r-3/2}).$
For reluctant small step models, one has $3.3< r < 7.5$.  More
recently, Garbit and Raschel have conjectured formulas for the
sub-exponential factor in the general case, and
remarkably suggest that for many (non-reluctant) models,
${h^\mS_n(\theta)}/{q^\mS_n} \in \mathcal{O}(1)$.
 
Next we address the efficient uniform random generation of walks in
$\walks (H_{\theta^*}, \mS)$.

\subsection{Half plane models as unidimensional walks}
We now describe efficient samplers for half-plane models $\walks
(H_\theta, \mS)$.  Remark that walks in any half-plane can be
generated as positive 1D walks, by taking 1D steps that are the
orthogonal projections of those in $\mS$ onto the half-plane boundary.

A unidimensional model is defined by a set $\mA\subset\mathbb{R}$. The
nontriviality conditions imply that $\mA$ contains both a positive and
a negative element.  The associated class of walks begins at $0$,
takes steps which are elements from $\mA$ such that the sum over any
prefix of the walk is nonnegative. If $\mA$ is a multiple of a set of
integers, then the class is modelled by a context free grammar, which
we describe in the next section. Otherwise, the class cannot be
trivially modelled by a context-free grammar, as is proven in
Section~\ref{sec:notCFG}.

Given $\mS$ and $\theta$, we define the associated unidimensional
model \[\mA(\theta)=\{i\sin\theta + j\cos\theta: (i,j)\in\mS\}.\]
The classes $\walks(H_\theta,\mS)$ and
$\walks (\mA(\theta), \mathbb{R}_{\geq 0})$ are in a straightforward
bijection. 
%

\begin{remark}[\cite{JoMiYeXX}]\label{rem:misc}
  If $\mS$ defines a non-trivial 2D quarterplane model, then
  $\mA(\theta)$ defines a non-trivial unidimensional model.  Moreover
  if $\mS$ is relunctant, then the drift of $\mA(\theta)$ is negative.
  Finally multiplying steps by a positive constant does not affect 
  the language of positive walks.
\end{remark}

Two cases arise, depending on whether or not $\mA(\theta^*)$ consists
of rational-valued steps (up to rescaling). This is equivalent to
asking if the slope of the boundary of the half-plane,
$m=\tan(\theta^*)$ is rational.
\subsection{Case 1: Rational projected steps}
\label{sec:grammars}
When $\tan(\theta^*)$ is rational, the steps in $\mA(\theta^*)$ can be
scaled to be integers, as mentioned in Remark~\ref{rem:misc}, therefore 
we consider unidimensional models $\mA$ which consist of integer-valued steps.

Combinatorial specifications and, specifically, context-free grammars
can then be used for random generation.  Context-free grammars are
indeed suitable to describe objects following rules which depend on a
single, integer counter---and place certain, finite constraints on
this counter. For the purpose of random walks, this counter may
typically keep track of the height of the walk, and be constrained to
always remain positive (i.e., the walk remains above the
$x$-axis). From a grammar, random objects can be sampled using a
variety of generic methods. More generally, this is equivalent to
saying that grammars can describe walks that are confined within a
\emph{half-plane}.

%

To build the grammar $\grammar(\mA)$ for a unidimensional model
defined by step set $\mA$, we first distinguish the positive, negative
and neutral steps
\begin{align*}
  \mA^+ &:= \set{ a \ | \ a\in \mA \text{ and } w(a) > 0 },&
  \mA^-&:= \set{ a\ | \ a\in \mA \text{ and } w(a) < 0 },
\end{align*}
\begin{equation*}
  \mA^0 := \set{a \ | \ a\in \mA \text{ and } w(a) = 0 }\text{,}
\end{equation*}
and define the largest upward and downward step lengths
\begin{align*}
  \bar{a} &:= \max \mA^+ &  \bar{b} &:= -\min \mA^-.\end{align*}
Note that both of these lengths are positive, and are well-defined
when  the step set satisfies the conditions of non-triviality.

Using these three sets, and these two values we define the associated
grammar $\grammar(\mA)$, whose terminals are given by $\mA$, and
non-terminals are defined as follows:
\def\clsPaux{\cls[{\mathrm{aux}}]{P}}%
\begin{align*}
  \cls{P}   &= \cls{D} \times \clsPaux&
  \cls[i]{L}&= \sum_{a\in \mA\atop{w(a) = i}} a \; +
  \sum_{k=i+1}^{\min(\bar{a}, i+\bar{b})} \cls[k]{L} \cls[k-i]{R}\\
  \clsPaux &= \clsNeutral +
                          \sum_{k=1}^{\bar{a}} \cls[k]{L} \times \clsPaux &
                            \cls[j]{R}&= \sum_{b\in \mA\atop{w(b) = -j}} b \; +
  \sum_{k=j+1}^{\min(j+\bar{a}, \bar{b})} \cls[k-j]{L} \cls[k]{R}
\end{align*}
\begin{equation*}
  \cls{D}   = \sum_{c\in S\atop{w(c) = 0}} c \times \cls{D} \; +
               \sum_{k=1}^{\max(\bar{a},\bar{b})} \cls[k]{L} \times \cls{D}\times\cls[k]{R}\times\cls{D}
\end{equation*}
This follows from Duchon~\cite{Duch00},
Bousquet-M\'elou~and~Ponty~\cite{BoPo08}, with minor corrections to
the indices that prevent the grammar from referencing undefined
rules. The decomposition of a walk is unique and a schematic of a
typical decompostion is presented in Figure~\ref{fig:decomp}.
\begin{figure}\center
\begin{tikzpicture}[scale=1.4]
   \draw [thick,domain=0:170, <-, red] plot ({cos(\x)}, {sin(\x)});
   \draw [thick,domain=0:180, <-] plot ({cos(\x)+2}, {sin(\x)});
\draw [thick,domain=10:180, <-, blue] plot ({cos(\x)+4}, {sin(\x)});
\draw [dashed, thick, color=gray](-1,0) -- (5,0);
\draw [dashed, thick, color=gray](-1.1,-.5) -- (7,-.5);
\draw[ultra thick, ->, red] (-1.1,-.5) -- (-0.985,0.174) node[anchor=west] {$a$};
\draw[ultra thick, ->, blue] (4.985,0.174) -- (5.1,-.5) node[anchor=east] {$b$};
\draw [thick,domain=0:180, <-] plot ({cos(\x)+6.1}, {sin(\x)-0.5});
\draw[thick, <->, gray](2,-0.5)--(2,0)node[anchor=east]{};
\draw (1.75, -0.25)node{$k$};
\draw (2, 0.5)node{$\mathcal{D}$};
\draw (6.1, 0)node{$\mathcal{D}$};
\draw [red](0, 0.5)node{$\mathcal{L}_k$};
\draw [blue](4, 0.5)node{$\mathcal{R}_k$};
\end{tikzpicture}
\caption{Typical decomposition of a walk  in $\mathcal{D}$ with first
  step of height of height $a\geq k$.}
\label{fig:decomp}
\end{figure}
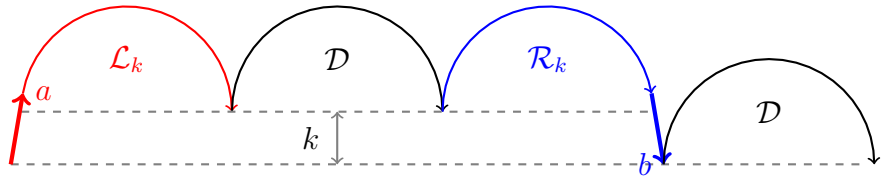


Given the step set $\mA$, $\mathcal{G}=\grammar (\mA)$ can be built in
constant time (proportional to $\max(\bar{a}, \bar{b})^2$). To
generate an element from a context free grammar, one either uses
recursive methods~\cite{Wilf1977} or Boltzmann
generation~\cite{DuFlLoSc02}.  The grammar here is straightforward, so
most common optimizations apply~\cite{Goldwurm1995}.

\begin{theorem}[Complexity of rational half-plane sampling]
Let $\walks (\mathbb{R}_{\geq 0}, \mA)$ be a non-trivial unidimensional model defined by a rational multiset~$\mA\subset\mathbb{Z}$. The uniform random generation of $k$ walks of length~$n$ in
$\walks (\mathbb{R}_{\geq 0}, \mA)$ can be performed in $\mathcal{O}(k\cdot n\log n)$ arithmetic operations using storage for $O(1)$ numbers. 
\end{theorem}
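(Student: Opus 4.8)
The plan is to establish the complexity claim by combining two standard ingredients from the analytic combinatorics toolkit: the fact that the grammar $\grammar(\mA)$ is a fixed (constant-size) context-free specification, and the known complexity of generic samplers applied to such specifications. Since the statement asserts a bound for generating $k$ walks with $\mathcal{O}(k\cdot n\log n)$ arithmetic operations and $\mathcal{O}(1)$ storage, the right framework is \emph{Boltzmann sampling}~\cite{DuFlLoSc02} rather than the recursive method, because the recursive method requires tabulating generating-function coefficients up to size $n$, which would contradict the $\mathcal{O}(1)$ storage claim.

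First I would recall that by Remark~\ref{rem:misc} and the construction preceding the theorem, the grammar $\grammar(\mA)$ is a context-free specification whose size depends only on $\max(\bar a,\bar b)^2$, i.e.\ on the step set $\mA$ and not on $n$. The non-triviality conditions guarantee that $\mA$ contains both positive and negative steps, so all non-terminals $\cls{D}$, $\cls{P}$, $\clsPaux$, $\cls[i]{L}$, $\cls[j]{R}$ are well-defined and the specification is a proper (aperiodic, strongly connected enough) algebraic system. The key point is that each non-terminal corresponds to an algebraic generating function, and since the specification is finite and independent of $n$, the oracle values needed for Boltzmann sampling can be precomputed in $\mathcal{O}(1)$ storage and in time independent of $n$ (or at worst absorbed into the per-structure cost).

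Next I would invoke the standard complexity of Boltzmann sampling for context-free (algebraic) specifications: targeting structures of size exactly $n$, a Boltzmann sampler tuned at the appropriate singularity $\rho$ produces an object of size $n\pm\mathcal{O}(\sqrt n)$ in expected time linear in the output size, and exact-size sampling is obtained by rejection. For algebraic classes the singularity type is square-root, which yields a rejection overhead of $\Theta(\sqrt n)$ per successful draw if one demands exactly size $n$; alternatively, using the \emph{anticipated rejection} / early-abort refinements one achieves $\mathcal{O}(n\log n)$ amortized per walk. The grammar here is simple enough that the most common optimizations apply~\cite{Goldwurm1995}, so I would argue that each walk of length $n$ costs $\mathcal{O}(n\log n)$ expected arithmetic operations, and generating $k$ independent walks multiplies this, giving the claimed $\mathcal{O}(k\cdot n\log n)$ bound, with only $\mathcal{O}(1)$ numbers stored (the grammar and its Boltzmann oracle, both of constant size).

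The main obstacle I anticipate is \emph{justifying the $\mathcal{O}(1)$ storage together with $\mathcal{O}(n\log n)$ time simultaneously}: these two requirements pull in opposite directions, since the recursive method gives the time bound but needs $\Omega(n)$ storage, whereas naive Boltzmann sampling gives the storage bound but, for exact-size sampling from a square-root-singular class, a single draw costs $\Theta(\sqrt n)$ rejections of average size $n$, i.e.\ $\Theta(n^{3/2})$ rather than $\mathcal{O}(n\log n)$. Reconciling these is the crux: I would resolve it by appealing to approximate-size Boltzmann sampling (targeting an interval $[(1-\epsilon)n,(1+\epsilon)n]$) combined with the pointing/singularity-analysis refinement that brings the expected cost down, or by noting that the $\log n$ factor accounts for the arithmetic on large numbers and the tuning of the Boltzmann parameter to precision $1/\mathrm{poly}(n)$. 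Carefully stating which variant of exactness (exact vs.\ approximate size) the theorem intends is therefore the delicate step, and I would make that choice explicit before quoting the corresponding $\mathcal{O}(n\log n)$ bound from the literature.
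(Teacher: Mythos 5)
Your proposal reconstructs essentially the argument the paper intends but never writes down: the theorem is stated with no proof at all, the only supporting text being the observation that $\grammar(\mA)$ has constant size (proportional to $\max(\bar a,\bar b)^2$) and the pointer to the recursive method \cite{Wilf1977}, Boltzmann sampling \cite{DuFlLoSc02}, and Goldwurm-style optimizations \cite{Goldwurm1995}. So your first two paragraphs are faithful to the paper's approach, and the comparison ends there as far as the paper's own content goes.

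The more important point is that the obstacle you isolate in your final paragraph is genuine, and the paper does not resolve it either. For the case that actually matters downstream (a reluctant $\mS$ projects to a unidimensional model of negative drift, by Remark~\ref{rem:misc}), one has $h_n\sim c\,\rho^{-n}n^{-3/2}$, so a singular Boltzmann sampler hits size exactly $n$ with probability $\Theta(n^{-3/2})$; with early abortion at ceiling $n$ each attempt costs $\Theta(\sqrt n)$ in expectation, for a total of $\Theta(n^{2})$ per exact-size walk rather than $\mathcal{O}(n\log n)$ (your figure of $\Theta(\sqrt n)$ rejections corresponds to a $-1/2$ singular exponent; here it is worse). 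Conversely, the methods that do deliver quasi-linear or quadratic time for exact-size context-free generation tabulate $\Omega(n)$ coefficients, contradicting the $O(1)$-numbers storage claim. Your proposed escapes --- approximate-size sampling, or charging the $\log n$ to oracle precision --- would establish a weaker or different statement (approximate length, or storage measured differently) than the one actually made. So: same route as the paper, but you have correctly identified that the stated pair of bounds is not simultaneously justified by any of the cited generic samplers, and an actual proof (or a weakened statement) is still owed at exactly the step you flag as delicate.
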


\begin{corollary}
  When the step set $\mS$ yields a rational $\mA$ unidimensional
  projection, Algorithm~\ref{alg:fullmonty} generates $k$ walks in the
  positive quadrant using $\mathcal{O}(k\cdot n^{r-1/2}\log n)$
  arithmetic operations, where $r$ is the exponent of the
  subexponential term in the asymptotics of $\walks (Q,\mS)$ .
\end{corollary}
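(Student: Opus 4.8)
The plan is to derive the bound by composing the two complexity results already in hand: the per-walk cost of the rejection scheme from the Theorem on the complexity of Algorithm~\ref{alg:fullmonty}, and the per-sample cost of the rational half-plane generator from the preceding Theorem on rational half-plane sampling. Concretely, that earlier theorem gives the expected cost of one quadrant walk as $\Theta\!\left(M(n)\cdot n^{r-3/2}\right)$, where $M(n)$ is the cost of one half-plane draw; so the whole task reduces to pinning down $M(n)$ in the rational regime and then accounting for all $k$ requested walks.

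First I would identify the cost of a single rejection trial. Since we are in the case where $\mA$ is rational, each trial draws one half-plane walk of length~$n$ through the grammar $\grammar(\mA)$. By the Theorem on rational half-plane sampling, generating any number $K$ of such walks costs $\mathcal{O}(K\,n\log n)$ arithmetic operations with $O(1)$ storage, so the amortized per-draw cost is $M(n)=\mathcal{O}(n\log n)$. I would stress that this cost is charged uniformly whether the trial is accepted or rejected, since the reinterpretation of a half-plane walk as a sequence of 2D steps together with the test of membership in~$Q$ only scans the $n$ steps once, at cost $\mathcal{O}(n)$, which is dominated by $\mathcal{O}(n\log n)$. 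The one-time setup of the algorithm --- computing $\theta^*$ from the critical point $(\alpha,\beta)$ of $S(x,y)$, fixing integer representatives $p,q$, and assembling $\grammar(\mA)$ --- is independent of both $n$ and $k$ and is therefore negligible.

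Next I would recall the expected number of trials. For a reluctant step set the projected model $\mA(\theta^*)$ has negative drift by Remark~\ref{rem:misc}, so the half-plane model $\walks(H_{\theta^*},\mS)$ carries subexponential factor $n^{-3/2}$ (Banderier--Flajolet) while sharing the quadrant growth factor $\rho^{-1}$ by Theorem~\ref{th:GR2014}; combined with the quadrant asymptotics~\eqref{eq:qnsim} this yields $h^\mS_n(\theta^*)/q^\mS_n\in\Theta(n^{r-3/2})$, the expected trial count. Summing over the generation of all $k$ quadrant walks, the expected total number of half-plane draws is $\Theta(k\,n^{r-3/2})$, and multiplying by the $\mathcal{O}(n\log n)$ cost per draw gives $\mathcal{O}\!\left(k\,n^{r-3/2}\cdot n\log n\right)=\mathcal{O}\!\left(k\,n^{r-1/2}\log n\right)$, as claimed.

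I do not anticipate a genuine obstacle: the statement is a clean composition of two quantified bounds. The only points needing care are purely in the bookkeeping --- verifying that the $n^{-3/2}$ half-plane factor and the $n^{-r}$ quadrant factor combine to exactly the $n^{r-3/2}$ trial count, and confirming that the per-trial cost may legitimately be charged uniformly across accepted and rejected attempts so that the expected cost factors as (expected number of draws)$\times$(cost per draw). Both are routine once the reluctant hypothesis is invoked to guarantee the negative-drift half-plane regime.
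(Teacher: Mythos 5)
Your proposal is correct and follows exactly the route the paper intends: the corollary is the direct composition of the theorem on the complexity of Algorithm~\ref{alg:fullmonty} (expected $\Theta(M(n)\cdot n^{r-3/2})$ trials-times-cost per quadrant walk) with the rational half-plane sampling theorem giving $M(n)=\mathcal{O}(n\log n)$, yielding $\mathcal{O}(k\cdot n^{r-1/2}\log n)$ overall. The paper states the corollary without an explicit proof precisely because it is this immediate combination, and your additional bookkeeping (the $n^{-3/2}$ versus $n^{-r}$ ratio and the uniform per-trial charge) is consistent with the paper's reasoning.
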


\subsection{Case 2: Non-rational  projected steps}
\label{sec:irrationalslope}
When the projected step set $\mA(\theta^*)$ contains non-rational steps, then the associated language is not context-free, and grammars can no longer be used directly. However, it is still possible to use a rational approximation of the perfect half-plane model, at the expense of the algorithmic efficiency.

\subsubsection{Contextuality of associated languages}\label{sec:notCFG}
\begin{lemma}
  Let $\mS\subset\mathbb{Z}^2$ be a finite set which defines a
  non-trivial quarterplane model. Let $\theta^*$ be angle determined
  by Theorem~\ref{th:GR2014} and assume furthermore that
  $m=\tan(\theta^*)$ is irrational.  Then, the language $\Lang{m}$
  whose alphabet is made from the pairs $(i,j)\in \mS$, and the words
  are restricted to walks in $\walks(H_{\theta^*}, \mS)$ is not
  context-free.
\end{lemma}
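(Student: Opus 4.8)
The plan is to derive a contradiction from Parikh's theorem, exploiting that the positivity constraint of the half-plane, when read off in terms of step multiplicities, defines a half-space of $\bbN^2$ with an irrational normal, and such a set cannot be semilinear.

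First I would pass through the bijection $\walks(H_{\theta^*},\mS)\cong\walks(\mathbb{R}_{\geq 0},\mA(\theta^*))$ to rewrite $\Lang{m}$ concretely: writing $f(i,j)=i\sin\theta^*+j\cos\theta^*$ for the projection of a step onto the normal of the boundary, a word $s_1\cdots s_n\in\mS^*$ lies in $\Lang{m}$ exactly when every prefix sum $\sum_{k\leq t} f(s_k)$ is nonnegative. Since $m$ is irrational, $f(i,j)=0$ forces $(i,j)=(0,0)$, so every step of $\mS$ has nonzero projection, and by non-triviality both signs occur. Moreover the hypothesis of Case~2 (projected steps not rational up to rescaling) excludes the degenerate situation in which all steps are collinear with the origin, so I can select two steps $s_1,s_2\in\mS$ that are linearly independent over $\bbQ$ and satisfy $f(s_1)>0>f(s_2)$.

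Next, assume for contradiction that $\Lang{m}$ is context-free. Then $L':=\Lang{m}\cap\{s_1,s_2\}^*$ is context-free, being the intersection of a context-free language with a regular one. I would compute its Parikh image explicitly: a multiset of $a$ copies of $s_1$ and $b$ copies of $s_2$ can be arranged into a valid walk (all prefix sums nonnegative) if and only if its total $a\,f(s_1)+b\,f(s_2)$ is nonnegative --- sufficiency by placing all $s_1$ first, necessity because the total is itself a prefix sum. Hence
\[
  \Psi(L') = \{(a,b)\in\bbN^2 : a\,f(s_1)+b\,f(s_2)\geq 0\}
           = \{(a,b)\in\bbN^2 : a\geq \lambda\, b\},
\]
with $\lambda=-f(s_2)/f(s_1)>0$. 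The linear independence of $s_1,s_2$ together with the irrationality of $m$ forces $\lambda$ to be irrational: were $f(s_1)/f(s_2)$ rational one could solve for $m$ as a ratio of integers, unless $s_1,s_2$ were proportional, which is ruled out.

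The crux, and the step I expect to require the most care, is to show that $\{(a,b)\in\bbN^2 : a\geq\lambda b\}$ is not semilinear when $\lambda$ is irrational; this contradicts Parikh's theorem (the Parikh image of a context-free language is semilinear) and completes the proof. I would argue that the lower frontier $b\mapsto\min\{a : a\geq\lambda b\}=\lceil\lambda b\rceil$ is a non-homogeneous Beatty sequence, whose sequence of successive differences is Sturmian and hence not eventually periodic, whereas a semilinear subset of $\bbN^2$ has an ultimately periodic frontier with rational slopes (equivalently, semilinear sets are exactly the Presburger-definable sets, and $\lceil\lambda b\rceil$ is not Presburger-definable for irrational $\lambda$). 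This incompatibility is the heart of the matter; the remaining bookkeeping --- the bijection, the choice of $s_1,s_2$, and the Parikh image computation --- is routine.
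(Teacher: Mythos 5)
Your argument is correct in outline, but it takes a genuinely different route from the paper. The paper also reduces to two projected steps $a>0>b$ with irrational ratio and intersects with a regular language, but it uses the narrower set $s_a^*s_b^*$ and then runs the context-free pumping lemma, the technical heart being a lemma producing, for every $p$, a word longer than $p$ whose final distance to the boundary is strictly smaller than that of every shorter word; the contradiction comes from a case analysis of the possible factorizations $x.u.y.v.z$. You instead intersect with all of $\{s_1,s_2\}^*$, observe that the Parikh image is exactly the half-space $\{(a,b)\in\bbN^2 : a\,f(s_1)+b\,f(s_2)\ge 0\}$ (your sufficiency/necessity argument for this is right: sort the positive steps first, and note the total is itself a prefix sum), and invoke Parikh's theorem. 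This buys you a much cleaner endgame: no bookkeeping over pumping decompositions, and the whole difficulty is concentrated in the single standard fact that an irrational half-space of $\bbN^2$ is not semilinear. Your Beatty/Sturmian/Presburger sketch of that fact is viable but is not the shortest path; a self-contained argument is: any linear set $v_0+\bbN v_1+\cdots+\bbN v_m$ contained in $S=\{a\ge\lambda b\}$ must have $\phi(v_i):=(v_i)_1-\lambda (v_i)_2>0$ for every period (let $n\to\infty$ along $v_0+nv_i$ and use irrationality of $\lambda$ to exclude equality), so $\phi$ is bounded below by a positive constant on the periods and hence tends to $+\infty$ on any infinite linear subset of $S$; but the points $(\lceil\lambda b\rceil,b)\in S$ have $\phi<1$ while going to infinity, so no finite union of such linear sets covers $S$. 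With that step filled in, your proof is complete, and arguably more robust than the paper's (the published pumping argument has some sign and notation wobbles, e.g.\ an undefined $w'$ in the $\Delta(u.v)>0$ case). Do make sure to justify, as you began to, the existence of two $\bbQ$-independent steps with projections of opposite signs; the paper asserts the analogous claim with the same level of detail, so you are not worse off there.
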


\begin{proof}Consider two steps $a, b\in \mA$, encoded by symbols $s_a$ and $s_b$, such that $a>0$ and $b<0$ and $a/b$ is irrational. The existence of such steps follows from the non-triviality of $\mS$, and the irrationality of $\tan(\theta^*)$.
  First, recall that the intersection of a context-free language
  and a rational language is a context-free language. If the
  intersection language
$$\Lang{m}^{\cap}=\{s_a^*s_b^*\}\cap \Lang{m} = \{s_a^is_b^j\mid a\cdot i-b\cdot j\ge 0\}$$ is not context-free, then neither is $\Lang{m}$. 

The fact that $\Lang{m}^\cap$ is not context free can be proven
using the context-free version of the pumping lemma, which states
that, if $\Lang{m}^\cap$ is context-free, then there exists a word
length $p$ above which each word $w\in\Lang{m}^\cap$ can be decomposed
as $w=x.u.y.v.z$ such that $|u.y.v|\le p$, $|u.v|\ge 1$, and
$\{x.u^i.y.v^i.z\mid i\in\mathbb{N}\}\subset\Lang{m}^\cap$.

Let $\Delta(w) = |w|_{s_a}\cdot a - |w|_{s_b}\cdot b$ denote the (signed) final distance to the half plane, we establish the following technical lemma. 
\begin{lemma}For any $p\ge 0$, there exists a word $w^*\in\Lang{m}^\cap, $ $|w^*|>p$, such that $\Delta(w^*)<\Delta(w)$ for all $w\in\Lang{m}^\cap$, $|w|<|w^*|$.\label{lem:existsCloserWord}
\begin{proof}
Assume that $p$ is given, and let $\Delta^{\le p}$ denote the smallest distance to the half plane of a word of length $\le p$, reached by some word $s_a^{x^{\bullet}}s_b^{y^{\bullet}} \in \Lang{m}^\cap$ of length $x^\bullet+y^\bullet\le p$. 

First we constructively show the existence of a word of length greater than $p$, whose final distance to the half-plane is smaller than $\Delta^{\le p}$.
Consider the word $$w^{\circ}:=s_a^{K\cdot x_\bullet}s_b^{K\cdot y_\bullet} s_b\quad\text{ where }\quad K:=\left\lceil\frac{b}{\Delta^{\le p}}\right\rceil.$$
Since both the slope and ratio $a/b$ are irrational, then $\Delta^{\le p}\neq 0$ and such a word exists.
The final distance to the half plane of $w^{\circ}$ is given by:
\begin{align*}  
  \Delta(w^\circ) 
&=  K\cdot\Delta^{\le p} - b = \left(\left\lceil\frac{b}{\Delta^{\le p}}\right\rceil-\frac{b}{\Delta^{\le p}}\right)\cdot \Delta^{\le p}< \Delta^{\le p}.
   \end{align*}

Consider now the smallest word $w^*\in\Lang{m}^\cap$ such that $\Delta(w^*)<\Delta^{\le p}$.
Such a word exists since $\Delta(w^\circ)<\Delta^{\le p}$ and clearly obeys $|w^*|>p$. Since $w^*$ is the smallest word such that $|w^*|>p$ and $\Delta(w^*)<\Delta^{\le p}$, then one has $\Delta(w^*)<\Delta^{\le |w^*|}$ which proves our claim.
\end{proof}
\end{lemma}

Let us now investigate the possible factorizations as $x.u.y.v.z$ of the word $w^*$, whose existence is established by Lemma~\ref{lem:existsCloserWord}, and show that neither of them satisfies the pumping lemma. Focusing on $u$ and $v$, remark that neither of them should simultaneously feature both kinds of steps, otherwise any word $w^{[1]}=x.u^2.y.v^2.z\notin\Lang{m}^\cap$, as it would feature at least two peaks. It follows that any satisfactory decomposition must be of the form $u=s_a^i$ and $v=s_b^j$, and the non-rationality of $a/b$ implies that $\Delta(u.v)\neq 0$. If $\Delta(u.v)<0$, then the word $w^{[2]}=x.u^r.y.v^r.z$, $r>\lceil \Delta(w^*)/\Delta(u.v)\rceil$ is such that $\Delta(w^{[2]})<0$, and therefore $w^{[2]}\notin \Lang{m}^\cap$. 
If $\Delta(u.v)>0$, then let us observe that $u.v\in\Lang{m}^\cap$ and has total length $i+j<p$, therefore Lemma~\ref{lem:existsCloserWord} implies that $\Delta(u.v)>\Delta^{|w'|}$. It follows that the word $w^{[3]}=x.u^0.y.v^0.z$ has final distance to the slope $\Delta^{|w'|}-\Delta(u.v)<0$ and therefore $w^{[3]}\notin\Lang{m}^\cap$. Having found all possible decompositions lacking in some respect, we conclude that $\Lang{m}^\cap$ is not a context-free language, and neither is $\Lang{m}$.
\end{proof}

\subsection{Rational approximations}
\label{sec:ratapprox}
All is not lost in the case of an irrational slope model, however, as
we can define an approximation to the slope that is 
sufficiently close to the optimal slope to ensure polynomial-time rejection. 

\begin{definition}{$\delta$-rational approximation}
  A half-plane model $\mH_{\theta_r}(\mS)$ is a $\delta$-rational
  approximation of a half-plane model $\mH_\theta(\mS)$ if and
  only if $m_r := \tan{\theta_r} \in \bbQ$ and $|\tan{\theta}-\tan{\theta_r}|\le \delta$.
\end{definition}
\begin{proposition}\label{thm:approx}
  For any model $\walks(H_\theta, \mS)$ and $\delta>0$ a desired precision, there exists a
  grammar with $\mathcal{O}(1/\delta)$ non-terminals and
  $\mathcal{O}(1/\delta^2)$ rules, which generates a $\delta$-rational
  approximation $\walks(\mH_{\theta_r},\mS)$ of $\walks(H_\theta,\mS)$.
\end{proposition}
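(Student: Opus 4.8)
The plan is to construct an explicit rational slope $m_r = p/q$ approximating $m = \tan(\theta^*)$ within $\delta$, and then apply the grammar construction of Section~\ref{sec:grammars} to the integer step set it induces, controlling the number of non-terminals and rules in terms of $p$ and $q$. First I would invoke a standard Diophantine approximation: for any irrational $m$ and any bound $\delta > 0$, there exist integers $p, q$ with $|m - p/q| \le \delta$ and, crucially, $q = \mathcal{O}(1/\delta)$ (one may take the continued-fraction convergent with denominator just below $1/\sqrt{\delta}$, or simply the best rational with denominator at most $\lceil 1/\delta \rceil$, which by Dirichlet's theorem achieves error at most $1/q^2 \le \delta$ when $q \approx 1/\sqrt\delta$, and in any case $q = \mathcal{O}(1/\delta)$ suffices for the claimed bounds). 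Setting $\theta_r = \arctan(p/q)$ gives the required $\delta$-rational approximation; by Remark~\ref{rem:misc} the projected model $\mathcal{A}(\theta_r)$ may be rescaled to integers, namely $\mathcal{A} = \{ip + jq : (i,j) \in \mS\}$ exactly as in Algorithm~\ref{alg:fullmonty}.

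Next I would bound the parameters of the grammar $\grammar(\mathcal{A})$ from Section~\ref{sec:grammars}. The key observation is that the largest upward and downward step lengths are
\[
\bar a = \max_{(i,j)\in\mS}(ip+jq), \qquad \bar b = -\min_{(i,j)\in\mS}(ip+jq),
\]
so both are bounded by $C\cdot\max(p,q)$ where $C := \max_{(i,j)\in\mS}(|i|+|j|)$ is a constant depending only on the fixed step set $\mS$. Since $p = \mathcal{O}(1/\delta)$ and $q = \mathcal{O}(1/\delta)$, we get $\bar a, \bar b = \mathcal{O}(1/\delta)$. The grammar $\grammar(\mathcal{A})$ has non-terminals $\cls[i]{L}$ for $1 \le i \le \bar a$ and $\cls[j]{R}$ for $1 \le j \le \bar b$, together with the constant number of symbols $\cls{P}, \clsPaux, \cls{D}$; hence the total number of non-terminals is $\bar a + \bar b + \mathcal{O}(1) = \mathcal{O}(1/\delta)$. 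Counting rules from the grammar definition, each $\cls[i]{L}$ and each $\cls[j]{R}$ contributes a sum over an index range of width at most $\min(\bar a, \bar b) = \mathcal{O}(1/\delta)$, and $\clsPaux$ and $\cls{D}$ each contribute $\mathcal{O}(\max(\bar a,\bar b)) = \mathcal{O}(1/\delta)$ terms; summing over the $\mathcal{O}(1/\delta)$ non-terminals yields $\mathcal{O}(1/\delta^2)$ rules in total, as claimed.

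The main obstacle is not the grammar bookkeeping, which is essentially routine once the step lengths are bounded, but rather pinning down the right quantitative form of the approximation so that the slope error is genuinely $\le\delta$ while keeping $q = \mathcal{O}(1/\delta)$. One must be slightly careful that the $\arctan$ is monotone, so a bound $|m - m_r| \le \delta$ on slopes translates into the correct closeness of $\theta_r$ to $\theta^*$ for the $\delta$-rational approximation definition; and one should confirm the rescaled step set $\mathcal{A}$ is still \emph{non-trivial} (contains a positive and a negative element) so that $\bar a, \bar b$ are well-defined and positive. By Remark~\ref{rem:misc}, non-triviality of the 2D model guarantees non-triviality of the projection for slopes sufficiently close to $\theta^*$, which holds once $\delta$ is small; for the finitely many large-$\delta$ cases the statement is vacuous or trivially satisfied by a coarse rational. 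With these checks in place the parameter counts follow directly, completing the proof.
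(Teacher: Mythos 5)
Your argument is correct and matches the approach the paper intends: the paper states Proposition~\ref{thm:approx} without an explicit proof, but its Algorithm~\ref{alg:fullmonty} (choosing $p,q$ with $m\approx p/q$ and projecting to $\{ip+jq\}$) together with the remark that $\grammar(\mA)$ is built in time proportional to $\max(\bar a,\bar b)^2$ is exactly the Diophantine-approximation-plus-grammar-counting argument you give. Your bounds $\bar a,\bar b=\mathcal{O}(1/\delta)$, hence $\mathcal{O}(1/\delta)$ non-terminals and $\mathcal{O}(1/\delta^2)$ rules, are the intended justification, and your cautionary checks (monotonicity of $\arctan$, non-triviality of the projected step set) are sensible but routine.
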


Remark that, as soon as $|\tan{\theta}-\tan{\theta_r}|>0$, the exponential growth factor of the half-plane 
model becomes greater than that of the quarter-plane model, and Algorithm~\ref{alg:fullmonty} becomes exponential on $n$.
On the other hand, for any length $n\in\mathbb{N}$, setting $\delta:=1/(n+1)$ will define a model $\mH_{\theta_r}(\mS)$ which coincides with $\mH_{\theta}(\mS)$ on positive walks of length $n$. Indeed, the accumulated {\em error} due to the approximation of the step set remains too small to lead to the acceptance of some walk in $\mH_{\theta_r}(\mS)$ and not in $\mH_{\theta}(\mS)$ (and vice-versa).

\begin{theorem}[Complexity of $1/(n+1)$-rational approximation]
Let $\walks (\mathbb{R}_{\geq 0}, \mA, n)$ be a non-trivial unidimensional model defined by a non-rational multiset~$\mA$. The uniform random generation of $k$ walks of length~$n$ in
$\walks (\mathbb{R}_{\geq 0}, \mA, n)$ can be performed in:
\begin{itemize}
\item $\mathcal{O}(k\cdot n\log n + n^3)$ arithmetic operations, using storage for $\mathcal{O}(n^3)$ large integers~\cite{FlZiVa94}; 
\item $\mathcal{O}(k\cdot n^3\log n + n^2)$ arithmetic operations, using storage for $\mathcal{O}(n^2)$ large integers~\cite{Goldwurm1995};
\item $\mathcal{O}(k\cdot n^2 + n^2)$ arithmetic operations, using storage for $\mathcal{O}(n^2)$ real values (Oracle)~\cite{DuFlLoSc02,Pivoteau2008}.
\end{itemize}
\end{theorem}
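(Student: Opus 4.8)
The plan is to reduce the whole problem to the uniform generation of size-$n$ objects from a single context-free grammar, and then to obtain the three bounds by instantiating the three classical generation frameworks that are cited. First I would fix the target length $n$ and set $\delta:=1/(n+1)$. Applying Proposition~\ref{thm:approx} with this precision produces a grammar $\Gram$ for a $\delta$-rational approximation $\walks(\mH_{\theta_r},\mS)$, having $\mathcal{O}(1/\delta)=\mathcal{O}(n)$ non-terminals and $\mathcal{O}(1/\delta^2)=\mathcal{O}(n^2)$ rules. By the observation following Proposition~\ref{thm:approx}, the choice $\delta=1/(n+1)$ guarantees that $\walks(\mH_{\theta_r},\mS)$ and $\walks(H_{\theta^*},\mS)$ coincide on positive walks of length exactly $n$: the projection error accumulated over $n$ steps stays below one lattice unit and so can never turn a nonnegative prefix into a negative one, nor the reverse. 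Composing with the bijection between $\walks(H_{\theta^*},\mS)$ and the unidimensional model $\walks(\mathbb{R}_{\geq 0},\mA)$ recalled earlier, uniform generation of the length-$n$ words of $\Gram$ is \emph{exactly} uniform generation in $\walks(\mathbb{R}_{\geq 0},\mA,n)$, with no bias introduced by the approximation. This is the step I would treat most carefully, since it is the one that preserves uniformity despite the irrationality of $\mA$.

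It then remains to instantiate, for a grammar of this size, the complexity of each generation method, substituting $\Theta(n)$ non-terminals, $\Theta(n^2)$ rules, and target size $n$. The cleanest bookkeeping comes from viewing $\Gram$ through its height-indexed counting recurrence: after rescaling, the integer step set $\mA=\{ip+jq:(i,j)\in\mS\}$ has $\mathcal{O}(1)$ letters of heights $\bar a,\bar b=\mathcal{O}(n)$, so a length-$n$ positive walk reaches heights in $[0,n\bar a]$, an interval of size $\mathcal{O}(n^2)$. For the recursive method~\cite{FlZiVa94} I would tabulate $T[h][t]$, the number of length-$t$ positive continuations from height $h$, over the $\mathcal{O}(n^2)\times\mathcal{O}(n)=\mathcal{O}(n^3)$ admissible pairs. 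Each entry is a sum over the $\mathcal{O}(1)$ admissible steps, so the table is filled in $\mathcal{O}(n^3)$ arithmetic operations and stored as $\mathcal{O}(n^3)$ large integers; thereafter each length-$n$ walk is drawn in $\mathcal{O}(n\log n)$ by the boustrophedon sampler of~\cite{FlZiVa94}, for a total of $\mathcal{O}(k\cdot n\log n + n^3)$.

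For the remaining two bullets I would keep the same reduction but trade space for time. The space-efficient recursive scheme of Goldwurm~\cite{Goldwurm1995} stores only the $\mathcal{O}(n)$ per-non-terminal counting sequences up to size $n$, i.e.\ $\mathcal{O}(n^2)$ large integers, and recomputes on the fly the split-point counts needed during generation, which raises the per-sample cost to $\mathcal{O}(n^3\log n)$ and gives $\mathcal{O}(k\cdot n^3\log n + n^2)$ overall. For the Boltzmann sampler~\cite{DuFlLoSc02} I would instead precompute, using the oracle of Pivoteau et al.~\cite{Pivoteau2008}, the $\mathcal{O}(n^2)$ real branching weights of $\Gram$ at its dominant singularity in $\mathcal{O}(n^2)$ time and space; each size-$n$ walk is then produced by exact-size Boltzmann sampling in expected $\mathcal{O}(n^2)$ time, yielding $\mathcal{O}(k\cdot n^2 + n^2)$ in total with $\mathcal{O}(n^2)$ real values stored.

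The main obstacle is the complexity accounting rather than the probability. The classical analyses in~\cite{FlZiVa94,Goldwurm1995,DuFlLoSc02,Pivoteau2008} assume a grammar of constant size, whereas here the grammar grows with $n$ (namely $\Theta(n^2)$ rules). I would therefore have to track explicitly how this rule count enters each bound, and argue — through the height-indexed recurrence above, where the $\mathcal{O}(1)$-letter alphabet keeps the cost of each table entry constant — that the growth of the grammar does not inflate the stated preprocessing and sampling costs. A secondary point to check is that the singularity and oracle values consumed by the Boltzmann variant are well defined and computable to the accuracy required for an $\mathcal{O}(n^2)$ expected-time exact-size sampler on a grammar whose parameters vary with $n$.
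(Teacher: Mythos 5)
Your proposal follows exactly the route the paper intends (the theorem is stated without an explicit proof, but the surrounding text sets it up the same way): apply Proposition~\ref{thm:approx} with $\delta=1/(n+1)$ to get a grammar with $\mathcal{O}(n)$ non-terminals and $\mathcal{O}(n^2)$ rules that coincides with the exact model on length-$n$ walks, then instantiate the three cited generation frameworks. Your complexity bookkeeping (in particular the height-indexed table of $\mathcal{O}(n^2)\times\mathcal{O}(n)$ entries explaining the $\mathcal{O}(n^3)$ storage in the first bullet) is consistent with the stated bounds, so this is essentially the paper's argument with the omitted details filled in.
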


Finally, we conjecture that a polynomial rejection is 
actually reached using a $(1/\sqrt{n})$-rational approximation.
\TODOYann{To be replaced by a suitable conjecture}
\begin{conjecture}
Let $h_{n}(\theta)$ be the number of walks of length $n$ in an half-plane model $\mH_{\theta}(\mS)$,
there exists an infinite sequence of angles $\{\theta_n\}_{n\le0}$ such that:
\begin{itemize}
\item For all $n\ge0$,  $\mH_{\theta_n}(\mS)$ is a $(1/\sqrt{n})$-rational approximation of $\mH_{\theta^*}(\mS)$;
\item The number of rejections in Algorithm~\ref{alg:fullmonty} remains polynomial in $n$:
\[
\exists p\in \mathbb{R}, \lim_{n\to+\infty} \frac{h_{n}(\theta_n)}{h_{n}(\theta)} \in \mathcal{O}(n^p)
\]
\end{itemize}
\end{conjecture}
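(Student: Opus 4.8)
The plan is to isolate the only genuinely new quantity — the extra rejection cost incurred by replacing the exact (irrational) slope $\tan\theta^*$ by a nearby rational $\tan\theta_n$ — and to reduce its control to a single analytic estimate on the growth factor $\rho(\theta)$, leaving one hard uniformity statement as the crux. Reading the $\theta$ appearing in the statement as the optimal $\theta^*$, the complexity analysis of Algorithm~\ref{alg:fullmonty} already gives $h^\mS_n(\theta^*)/q^\mS_n\in\Theta(n^{r-3/2})$ via~\eqref{eq:qnsim}. Hence the true expected number of rejections from the approximating model factors as
\[
\frac{h^\mS_n(\theta_n)}{q^\mS_n}=\frac{h^\mS_n(\theta_n)}{h^\mS_n(\theta^*)}\cdot\frac{h^\mS_n(\theta^*)}{q^\mS_n},
\]
so it suffices to prove exactly the ratio in the second condition, namely that $h^\mS_n(\theta_n)/h^\mS_n(\theta^*)$ is polynomially bounded.

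First I would establish the key quantitative feature of the growth factor: the map $\theta\mapsto\rho(\theta)$ is real-analytic near $\theta^*$ and attains there an interior, non-degenerate maximum. This follows from the Banderier--Flajolet analysis~\cite{BaFl02} of the projected one-dimensional model $\mA(\theta)$, whose exponential growth factor is an analytic function of the step weights and hence of $\theta$; and since the reluctant hypothesis makes the projected drift $\delta_1\sin\theta+\delta_2\cos\theta$ strictly negative throughout $(0,\pi/2)$, the maximiser $\theta^*=\argmax_\theta\rho(\theta)$ is interior, forcing $\rho'(\theta^*)=0$ and, generically, $\rho''(\theta^*)<0$. The payoff is a purely quadratic penalty: a rational slope with $|\tan\theta_n-\tan\theta^*|\le 1/\sqrt n$ — which exists with denominator of order $\sqrt n$ by Dirichlet's theorem — yields $\rho(\theta^*)-\rho(\theta_n)=\mathcal{O}(1/n)$, whence
\[
\left(\frac{\rho(\theta^*)}{\rho(\theta_n)}\right)^{\!n}=\bigl(1+\mathcal{O}(1/n)\bigr)^{n}=\mathcal{O}(1).
\]
This is precisely why the scale $1/\sqrt n$ is natural: it is the largest slope error whose $n$-fold exponential amplification remains bounded.

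The sub-exponential powers then cancel at the level of the heuristic: both $\theta^*$ and the perturbed $\theta_n$ lie strictly inside the negative-drift regime, so both meander counts carry the same $n^{-1/2}$ correction of Banderier--Flajolet, and $h^\mS_n(\theta_n)/h^\mS_n(\theta^*)$ is, to leading order, the bounded exponential factor above times the ratio $C(\theta_n)/C(\theta^*)$ of the two prefactor constants. Were these constants controlled, one would in fact obtain an $\mathcal{O}(1)$ — not merely polynomial — rejection overhead.

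The hard part, and the reason this is stated as a conjecture, is making the last paragraph rigorous, because the per-angle asymptotics are \emph{not} uniform in the two ways that matter here simultaneously. First, the rational approximant has denominator of order $\sqrt n$, so the scaled integer step set $\{ip+jq:(i,j)\in\mS\}$ has span of order $\sqrt n$: the ``small-step'' hypothesis underlying the clean $C(\theta)\,\rho(\theta)^{-n}n^{-1/2}$ estimate fails, and $C(\theta_n)$ must be tracked as the alphabet grows with $n$. Second, one approaches $\theta^*$ inside a diffusive window of width $1/\sqrt n$, exactly the scale on which the dominant-singularity location and the implied constants of the Banderier--Flajolet expansion degenerate, so a naive term-by-term comparison is invalid. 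Overcoming this requires a genuinely uniform estimate — either a saddle-point analysis that tracks the (possibly coalescing) saddle of the kernel as $(\theta_n,n)\to(\theta^*,\infty)$ jointly, or an appeal to near-critical functional limit theorems of the type underlying~\cite{DeWa15,GaRa14} — and checking that such an estimate keeps $C(\theta_n)/C(\theta^*)$ polynomially bounded, rather than merely sub-exponential, in $n$ is the principal obstacle.
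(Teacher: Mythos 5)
This statement is left in the paper as an open conjecture --- there is no proof to compare against (the source even carries an internal note that the statement is ``to be replaced by a suitable conjecture''). So the only question is whether your attempt closes the gap, and by your own account it does not: what you give is a heuristic together with a precise identification of the missing estimate, not a proof.

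That said, your reduction and your heuristic are sound, and they match the implicit reasoning behind the paper's choice of the $1/\sqrt{n}$ scale. Factoring $h^\mS_n(\theta_n)/q^\mS_n = \bigl(h^\mS_n(\theta_n)/h^\mS_n(\theta^*)\bigr)\cdot\bigl(h^\mS_n(\theta^*)/q^\mS_n\bigr)$ and invoking~\eqref{eq:qnsim} correctly reduces the conjecture to polynomial control of the first ratio; and the observation that $\rho'(\theta^*)=0$ at an interior maximum turns a slope error of order $1/\sqrt{n}$ into a growth-factor error of order $1/n$, hence a bounded $n$-th power $\bigl(\rho(\theta^*)/\rho(\theta_n)\bigr)^n$, is exactly why $1/\sqrt{n}$ is the natural threshold. (Note that non-degeneracy $\rho''(\theta^*)<0$ is not needed for this upper bound: a flatter maximum only makes the penalty smaller. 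Also, the interiority of $\theta^*$ is automatic in the relevant case, since a boundary maximizer gives a rational slope and Case~2 never arises.) The obstruction you name is the genuine one: the Banderier--Flajolet asymptotics $h_n(\theta)\sim C(\theta)\,\rho(\theta)^{-n}n^{-1/2}$ hold for a \emph{fixed} integer step set, whereas the scaled alphabet $\{ip+jq:(i,j)\in\mS\}$ has span $\Theta(\sqrt{n})$, so the constant $C(\theta_n)$ and the error terms must be controlled uniformly both as the alphabet grows with $n$ and as $\theta_n$ enters the critical window around $\theta^*$. Until such a uniform (saddle-point or functional-limit) estimate is supplied, the statement remains conjectural; your write-up is a correct roadmap for it, not a proof of it.
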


\section{Remarks and future extensions}
\label{sec:Results}
We have implemented this algorithm in Python, with external calls to
Sage to compute $\theta^*$, and to Maple for Boltzmann Generation.
Experimentally, in the case of irrational projected steps, even crude
approximations led to much improved empirical complexities than both the 
default half-plane generation and the naive recursive generator. 
On the other hand, increasingly precise approximations led to an overwhelming 
growth in the size of the grammar, as could be expected from the asymptotic complexity. 
This raises interesting questions about the precise interplay
between the size of the grammar and the complexity, starting with our conjecture 
which we hope to address in a future version of this work. 

There are many possible optimizations, notably, anticipated rejection.
We expect this should have a positive effect on the complexity,
particularly in the null-drift cases, possibly after projection onto the 
targeted half-plane.

\paragraph{Natural extensions and generalizations}
Finally, many natural extensions come to mind. Generating excursions in the
quarter plane is difficult, but using our grammar-based approach it is completely 
straightforward. Finally, there are analogous ``best hyperplane'' theorems in
higher dimensions, and for more general cones, and our general approach could in principle 
generalize to these cases.

\subsection*{Acknowledgements}
We are very grateful for discussions with Kilian Raschel and Julien
Courtiel. 
\bibliographystyle{plain}
\bibliography{LMP}

\begin{thebibliography}{10}

\bibitem{BaBoJa13}
Axel Bacher, Olivier Bodini, and Alice Jacquot.
\newblock Exact-size sampling for motzkin trees in linear time via boltzmann
  samplers and holonomic specification.
\newblock In {\em ANALCO}, pages 52--61. SIAM, 2013.

\bibitem{BaSp14}
Axel Bacher and Andrea Sportiello.
\newblock Anticipated rejection algorithms and the darling?mandelbrot
  distribution.
\newblock In {\em AofA}, 2014.

\bibitem{BaFl02}
Cyril Banderier and Philippe Flajolet.
\newblock Basic analytic combinatorics of directed lattice paths.
\newblock {\em Theoret. Comput. Sci.}, 281(1-2):37--80, 2002.
\newblock Selected papers in honour of Maurice Nivat.

\bibitem{BoBoKaMe15}
Alin Bostan, Mireille Bousquet-M{\'e}lou, Manuel Kauers, and Stephen Melczer.
\newblock Three dimensional lattice walks restricted to the positive octant.
\newblock {\em Annals of Combinatorics}, to appear.

\bibitem{BoKa09}
Alin Bostan and Manuel Kauers.
\newblock Automatic classification of restricted lattice walks.
\newblock In {\em DMTCS Proceedings of the 21st International Conference on
  Formal Power Series and Algebraic Combinatorics (FPSAC'09), Hagenberg,
  Austria}, pages 203--217, 2009.

\bibitem{Bousquet11}
Mireille Bousquet-M{\'e}lou.
\newblock On the importance sampling of self-avoiding walks.
\newblock {\em Combinatorics, Probability and Computing}, pages 1--24, 2011.

\bibitem{BoMi10}
Mireille Bousquet-M{\'e}lou and Marni Mishna.
\newblock Walks with small steps in the quarter plane.
\newblock In {\em Algorithmic probability and combinatorics}, volume 520 of
  {\em Contemp. Math.}, pages 1--39. Amer. Math. Soc., Providence, RI, 2010.

\bibitem{BoPo08}
Mireille Bousquet-M{\'e}lou and Yann Ponty.
\newblock Culminating paths.
\newblock {\em Discrete Math. Theor. Comput. Sci.}, 10(2):125--152, 2008.

\bibitem{DeWa15}
Denis Denisov and Vitali Wachtel.
\newblock Random walks in cones.
\newblock {\em Ann. Probab.}, 43(3):992--1044, 2015.

\bibitem{Devr86}
Luc Devroye.
\newblock {\em Nonuniform random variate generation}.
\newblock Springer-Verlag, New York, 1986.

\bibitem{Duch00}
Philippe Duchon.
\newblock On the enumeration and generation of generalized {D}yck words.
\newblock {\em Discrete Math.}, 225(1-3):121--135, 2000.
\newblock Formal power series and algebraic combinatorics (Toronto, ON, 1998).

\bibitem{DuFlLoSc02}
Philippe Duchon, Philippe Flajolet, Guy Louchard, and Gilles Schaeffer.
\newblock Random sampling from {B}oltzmann principles.
\newblock In {Peter Widmayer et al.}, editor, {\em Automata, Languages, and
  Programming}, number 2380 in Lecture Notes in Computer Science, pages
  501--513. Springer Verlag, 2002.

\bibitem{Dura14}
Jetlir Duraj.
\newblock Random walks in cones: the case of nonzero drift.
\newblock {\em Stochastic Process. Appl.}, 124(4):1503--1518, 2014.

\bibitem{FlZiVa94}
Philippe Flajolet, Paul Zimmerman, and Bernard Van~Cutsem.
\newblock A calculus for the random generation of labelled combinatorial
  structures.
\newblock {\em Theoretical Computer Science}, 132(1-2):1--35, 1994.

\bibitem{GaRa14}
Rodolphe Garbit and Kilian Raschel.
\newblock On the exit time from a cone for {B}rownian motion with drift.
\newblock {\em Electron. J. Probab.}, 19:no. 63, 27, 2014.

\bibitem{Goldwurm1995}
Massimiliano Goldwurm.
\newblock Random generation of words in an algebraic language in linear binary
  space.
\newblock {\em Information Processing Letters}, 54(4):229 -- 233, 1995.

\bibitem{JoMiYeXX}
Samuel Johnson, Marni Mishna, and Karen Yeats.
\newblock Towards a combinatorial understanding of lattice path asymptotics.
\newblock arXiv:1305.7418, 2013.

\bibitem{KuRa11}
Irina Kurkova and Kilian Raschel.
\newblock Random walks in {$(\Bbb Z_+)^2$} with non-zero drift absorbed at the
  axes.
\newblock {\em Bull. Soc. Math. France}, 139(3):341--387, 2011.

\bibitem{Pivoteau2008}
Carine Pivoteau, Bruno Salvy, and Michèle Soria.
\newblock Boltzmann oracle for combinatorial systems.
\newblock In {\em Algorithms, Trees, Combinatorics and Probabilities}, pages
  475 -- 488. Discrete Mathematics and Theoretical Computer Science, 2008.
\newblock Proceedings of the Fifth Colloquium on Mathematics and Computer
  Science. Blaubeuren, Germany. September 22-26, 2008.

\bibitem{Rasc12}
Kilian Raschel.
\newblock Counting walks in a quadrant: a unified approach via boundary value
  problems.
\newblock {\em J. Eur. Math. Soc. (JEMS)}, 14(3):749--777, 2012.

\bibitem{Wilf1977}
Herbert~S. Wilf.
\newblock A unified setting for sequencing, ranking, and selection algorithms
  for combinatorial objects.
\newblock {\em Advances in Mathematics}, 24:281--291, 1977.

\end{thebibliography}
\end{document}